\documentclass[11pt]{article}
\usepackage[a4paper,margin=30mm]{geometry}
\usepackage{amsmath,amssymb,amsthm,mathtools}
\usepackage[numbers]{natbib}
\usepackage[colorlinks=true,citecolor=blue,linkcolor=blue,urlcolor=blue]{hyperref}
\usepackage{microtype}

\newtheorem{theorem}{Theorem}[section]
\newtheorem{lemma}[theorem]{Lemma}

\newtheorem{corollary}[theorem]{Corollary}
\theoremstyle{definition}

\theoremstyle{remark}
\newtheorem{remark}[theorem]{Remark}
\numberwithin{equation}{section}
\DeclareMathOperator{\Ann}{Ann}
\DeclareMathOperator{\gr}{gr}
\DeclareMathOperator{\tr}{tr}

\title{Jordan Blocks, Differential Traces, and Colon-Ideal
Noncontainment over Noetherian Quotients}
\author{Yizhi Zhang}
\date{}
\hypersetup{
  pdftitle={Jordan Blocks, Differential Traces, and Colon-Ideal Noncontainment over Noetherian Quotients},
  pdfauthor={Yizhi Zhang}
}

\begin{document}
\maketitle

\begin{abstract}
Let \(k\) be a field of characteristic zero, let \(R\) be a commutative
\(k\)-algebra, let \(I\subsetneq R\), and assume that \(A=R/I\) is
Noetherian. We prove that if \(a=[f]\in A\) is nilpotent and
\(d_{A/k}a=0\), then \(I:f\not\subseteq(I,f)\); if \(R\) is local with
maximal ideal \(\mathfrak n\), then
\(I:f\not\subseteq(I,f)+\mathfrak n(I:f)\). As applications, we prove
the isolated hypersurface case in arbitrary dimension and extend the
conclusion to non-isolated critical loci on smooth varieties. Finally,
we exhibit a five-variable isolated singularity satisfying
\(J_f:f\subseteq\overline{J_f}\), thereby disproving the
integral-closure strengthening.
\end{abstract}

\section{Introduction}\label{sec:introduction}

Let \(f\) define an isolated hypersurface singularity over a field of
characteristic zero, and let \(J_f\) be its Jacobian ideal. In the
quasihomogeneous case the Euler identity gives \(f\in J_f\), with
consequences for the Milnor and Tjurina numbers and for Jacobian
syzygies. In the general case, Hassanzadeh, Nasrollah Nejad, and Simis
asked whether
\begin{equation}\label{eq:intro-original}
J_f:f\not\subseteq(J_f,f)
\end{equation}
always holds. They proved the plane-curve case and stated the
higher-dimensional case as Conjecture~3.6
\cite{HassanzadehNasrollahNejadSimis2025}.

We turn \eqref{eq:intro-original} into a structural question about
nilpotent multiplication. If
\(A=R/J_f\) and \(a=[f]\), then \(J_f:f\) modulo \(J_f\) is
\(\Ann_A(a)\), whereas \((J_f,f)\) modulo \(J_f\) is \(aA\). Thus the
problem is equivalent to proving \(\Ann_A(a)\not\subseteq aA\), or,
equivalently, to the existence of a size-one Jordan block for \(m_a\).

Our first main theorem is more general than the original
singularity-theoretic statement.
\begin{theorem}\label{thm:intro-main}
Let \(k\) be a field of characteristic zero, let \(R\) be a commutative
\(k\)-algebra, let \(I\subsetneq R\), and assume that \(A=R/I\) is
Noetherian. For \(f\in R\), write \(a=[f]\in A\). If \(a\) is
nilpotent and \(d_{A/k}a=0\), then
\[
\boxed{I:f\not\subseteq(I,f).}
\]
\end{theorem}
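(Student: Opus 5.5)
The plan is to argue by contradiction, translating the hypothesis \(I:f\subseteq(I,f)\) into a flatness statement and then playing it off against \(d_{A/k}a=0\). As in the introduction, \((I:f)/I=\Ann_A(a)\) and \((I,f)/I=aA\), so I would assume \(\Ann_A(a)\subseteq aA\) and aim for a contradiction. If \(a=0\) this assumption reads \(A\subseteq 0\), which is impossible since \(I\neq R\). So I take \(n\ge 2\) minimal with \(a^n=0\), and let \(B=k[t]/(t^n)\) act on \(A\) via \(t\mapsto a\).

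\emph{Step 1 (all Jordan blocks of \(m_a\) have full size).} I want to upgrade \(\Ann_A(a)\subseteq aA\) to \(\Ann_A(a)=a^{n-1}A\). The inclusion \(\supseteq\) is clear. For \(\subseteq\), I would prove by descending induction that \(\Ann_A(a)\subseteq a^{j}A\) for \(1\le j\le n-1\). Given \(x\in\Ann_A(a)\cap a^jA\), I write \(x=a^jy\). Then \(a^{j+1}y=0\), so the class of \(y\) is killed by \(a^{j+1}\). The task is to show \(y\in\Ann(a^{j+1})\subseteq a^{n-j-1}A+\Ann(a^{j})\), and then substitute back. The auxiliary statement \(\Ann_A(a^{i})\subseteq a^{n-i}A+\Ann_A(a^{i-1})\) should itself follow from \(\Ann_A(a)\subseteq aA\) by a peeling argument. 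If this elementwise bookkeeping becomes messy, I would instead use Noetherianity: the chain \(\Ann(a)\cap a^jA\) is handled by Krull-type arguments in the finitely generated \(A\)-modules \(a^jA/a^{j+1}A\). Once \(\Ann_A(a)=a^{n-1}A\), we have \(\operatorname{Tor}^B_1(k,A)=\Ann_A(a)/a^{n-1}A=0\). Since the maximal ideal \((t)\) of \(B\) is nilpotent, the local flatness criterion applies to arbitrary modules and gives that \(A\) is flat over \(B\).

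\emph{Step 2 (flatness contradicts \(da=0\)).} Now \(A\) is a flat deformation of \(A_0=A/aA\) over \(B\), and \(a\) is the image of \(t\). Tensoring \(0\to(t^{n-1})\to B\to B/(t^{n-1})\to0\) with \(A\) gives \(a^{n-1}A\cong A_0\) as \(A_0\)-modules. I would use this to build a \(k\)-derivation \(D\colon A\to a^{n-2}A/a^{n-1}A\cong A_0\) with \(D(a)\equiv 1\), heuristically \(D=\partial/\partial t\). Such a \(D\) factors through \(\Omega_{A/k}\), and then \(d_{A/k}a=0\) forces \(D(a)=0\), contradicting \(D(a)\equiv1\neq0\) (here \(A_0\ne0\) because \(a\) is nilpotent and \(A\neq0\)). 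Characteristic zero enters through the relation \(d(a^n)=na^{n-1}\,da\): this is what lets \(\partial/\partial t\) on \(B\) be compatible with the relation \(t^n=0\) after multiplying by the unit \(n\). Equivalently, I would examine the first fundamental sequence \(\Omega_{B/k}\otimes_BA\to\Omega_{A/k}\to\Omega_{A/B}\to0\). Flatness together with \(n\in k^\times\) should make the image of \(dt\otimes1\) nonzero, and that image is exactly \(da\).

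\emph{Main obstacle.} I expect Step 2 to be the heart of the proof. A flat \(B\)-algebra need not admit a derivation lifting \(\partial/\partial t\), because \(A\) need not be a trivial deformation \(A_0\otimes_kB\). So the derivation must be built only to first order, with values in a suitable graded piece. The first thing I would try is to work with the associated graded ring \(\gr_a A=\bigoplus a^iA/a^{i+1}A\). By flatness this ring is \(A_0[t]/(t^n)\), which carries the derivation \(\partial/\partial t\) with \(\partial t=1\). I would then show that the obstruction to descending a truncation of it to a \(k\)-derivation \(A\to A_0\) vanishes, using Noetherianity of \(A\) and the invertibility of \(n\).
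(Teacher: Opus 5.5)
Your proposal has two genuine gaps.

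\textbf{Step 1 is false.} It uses only \(\Ann_A(a)\subseteq aA\). That hypothesis rules out Jordan blocks of size one for \(m_a\), but it does not force every block to have full size \(n\). Take \(A=k[t,s]/(t^3,s^2,t^2s)\), \(a=t\), \(n=3\). With the basis \(1,t,t^2,s,ts\) one finds \(\Ann_A(t)=\operatorname{span}(t^2,ts)\subseteq tA\), yet \(t^2A=k\,t^2\). So \(\Ann_A(a)\neq a^{n-1}A\), and \(A\) is not flat over \(k[t]/(t^3)\). No peeling or Noetherian argument can repair this. The right move is to pass to \(C=A/(a^2)\) with \(c=a\bmod a^2\). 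There the hypothesis becomes exactly \(c\neq0\), \(c^2=0\), \(\Ann_C(c)=cC\), which is flatness over the dual numbers (Lemma~\ref{lem:colon-exact}). Also \(d_{C/k}c=0\) by functoriality.

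\textbf{Step 2's mechanism fails even when flatness holds.} Take \(A=k[x]/(x^4)\) and \(a=x^2\). Then \(\Ann_A(a)=aA\), so \(A\) is flat over \(k[\varepsilon]/(\varepsilon^2)\). But any derivation \(\delta\colon A\to A/aA\) satisfies \(\delta(x^2)=2x\,\delta(x)\in x\,(A/aA)\), so \(\delta(a)=1\) is impossible. The obstruction you hope to kill using Noetherianity and \(n\in k^\times\) therefore does not vanish. Your fundamental-sequence variant only restates the goal \(da\neq0\).

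What is missing is a functional on differentials that detects \(da\) without being an \(A\)-linear map to \(A/aA\). The paper proceeds as follows:
\begin{itemize}
\item It localizes \(C\) at a minimal prime \(\mathfrak p\); \(c\) survives because \(\Ann_C(c)=cC\subseteq\mathfrak p\).
\item It chooses a coefficient field \(L\) of the Artinian ring \(E=C_{\mathfrak p}\). Then \(E\) is free of rank \(r>0\) over \(D=L[\varepsilon]/(\varepsilon^2)\).
\item It uses the \(D\)-linear trace map \(T(b\,dx)=\tr(\rho(b)\,d\rho(x))\colon\Omega_{E/L}\to\Omega_{D/L}\) of the regular representation (Lemma~\ref{lem:finite-trace}). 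One has \(T(dc)=r\,d\varepsilon\neq0\) by characteristic zero, while \(T(c\omega)=\varepsilon T(\omega)=0\).
\end{itemize}
In the example above, \(T(d(x^2))=2\,d\varepsilon\), even though no derivation into \(A/aA\) detects \(d(x^2)\).

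This localization-plus-coefficient-field step is also what handles a general, possibly infinite-dimensional, Noetherian \(A\). Your outline does not address that case.
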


In the local case we prove a stronger statement.
\begin{theorem}\label{thm:intro-local}
Under the assumptions of Theorem~\ref{thm:intro-main}, suppose in
addition that \((R,\mathfrak n)\) is local. Then
\[
\boxed{I:f\not\subseteq(I,f)+\mathfrak n(I:f).}
\]
\end{theorem}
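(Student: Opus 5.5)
The plan is to derive Theorem~\ref{thm:intro-local} from Theorem~\ref{thm:intro-main} using the modular law and Nakayama's lemma over the Noetherian local ring \(A\). We do not redo the Jordan-block or trace argument.

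First we translate to \(A=R/I\). Since \(I\subsetneq R\) and \((R,\mathfrak n)\) is local, \(I\subseteq\mathfrak n\). Hence \(A\) is local with maximal ideal \(\mathfrak m=\mathfrak n/I\), and \(A\) is Noetherian by hypothesis. As noted in the introduction, \((I:f)/I=\Ann_A(a)\) and \((I,f)/I=aA\). The image of \(\mathfrak n(I:f)\) in \(A\) is \(\mathfrak m\,\Ann_A(a)\). All three ideals contain \(I\): the first two obviously, and the last because \(\mathfrak n(I:f)+I\) is contained in \((I,f)+\mathfrak n(I:f)\). So the asserted non-containment is equivalent to
\[
N:=\Ann_A(a)\not\subseteq aA+\mathfrak m N .
\]

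Suppose, for contradiction, that \(N\subseteq L+\mathfrak m N\) with \(L=aA\). Take \(x\in N\) and write \(x=\ell+y\) with \(\ell\in L\) and \(y\in\mathfrak m N\subseteq N\). Then \(\ell=x-y\in N\), so \(\ell\in L\cap N\). This is the modular law, and it gives
\[
N=(L\cap N)+\mathfrak m N .
\]
Now \(N\) is finitely generated because \(A\) is Noetherian, so \(M:=N/(L\cap N)\) is a finitely generated \(A\)-module with \(M=\mathfrak m M\). Nakayama's lemma gives \(M=0\), that is, \(N\subseteq L\). In the original notation this says \(I:f\subseteq(I,f)\).

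That conclusion contradicts Theorem~\ref{thm:intro-main}, whose hypotheses (characteristic zero, \(A\) Noetherian, \(a\) nilpotent, \(d_{A/k}a=0\)) are exactly the ones assumed here. This finishes the argument. The only point needing care is the bookkeeping in the first step. We must check that the image of \(\mathfrak n(I:f)\) is exactly \(\mathfrak m\Ann_A(a)\), and that Nakayama is applied over \(A\) rather than over \(R\). This matters because \(R\) itself need not be Noetherian, so the ideal \(I:f\) may not be finitely generated over \(R\); only its image \(N\) in \(A\) is used. Beyond that, all the real content is carried by Theorem~\ref{thm:intro-main}.
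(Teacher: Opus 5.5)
Your proposal is correct and is essentially the paper's own proof. The paper also reduces modulo \(I\) and applies Nakayama over the Noetherian local ring \(A\), using the module \((\Ann_A(a)+aA)/aA\), which is isomorphic to your \(N/(L\cap N)\). It then obtains the same contradiction with Theorem~\ref{thm:intro-main}.
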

The second theorem produces a minimal generator direction of \(I:f\)
that is not absorbed by the direction of \(f\).

The abstract theorem proves \eqref{eq:intro-original} in every
dimension, with the stronger local conclusion. It also applies to
polynomial local rings, convergent and algebraic power-series rings,
and equicharacteristic complete regular local rings. Moreover, after
subtracting the critical value, it applies at a rational critical point
of a smooth affine variety even when the critical locus is
positive-dimensional.

There is, however, a sharp limitation. The natural strengthening
\[
J_f:f\not\subseteq\overline{J_f}
\]
is false. We exhibit the five-variable example
\[
f=\frac14\sum_{i=1}^5x_i^4+(x_1+\cdots+x_5)^5
\]
for which \(\overline{J_f}=\mathfrak m^3\) and
\(J_f:f\subseteq\mathfrak m^3\). This explains why the controlled term
\(\mathfrak n(I:f)\), rather than an unrestricted larger ideal, occurs
in the local theorem.

The paper is organized as follows. Section~\ref{sec:background}
describes the original problem and its Jordan reformulation.
Section~\ref{sec:finite-trace} proves the finite-dimensional
differential-trace lemma. Section~\ref{sec:noether-rigidity} establishes
Noetherian conormal rigidity. Section~\ref{sec:colon-theorems} proves
the two main theorems. Section~\ref{sec:applications} treats Jacobian
ideals and critical loci. Section~\ref{sec:integral-counterexample}
constructs the integral-closure counterexample.

\section{Background}\label{sec:background}

Let \(k\) be a field of characteristic zero, let
\[
R=k[[x_1,\ldots,x_n]],\qquad \mathfrak m=(x_1,\ldots,x_n),
\]
and suppose that \(f\in\mathfrak m\) defines an isolated hypersurface
singularity at the origin. Its Jacobian and Tjurina ideals are
\[
J_f=(\partial_1f,\ldots,\partial_nf),\qquad I_f=(J_f,f).
\]
The dimensions of \(R/J_f\) and \(R/I_f\) are the Milnor number
\(\mu_f\) and the Tjurina number \(\tau_f\), respectively. One always has
\(\tau_f\leq\mu_f\), and
\[
\mu_f-\tau_f=\dim_k [f](R/J_f).
\]
Thus \(\mu_f=\tau_f\) if and only if \([f]=0\) in \(R/J_f\), equivalently,
\(f\in J_f\). For an isolated complex analytic hypersurface germ,
Saito's classical criterion states that \(f\in J_f\) if and only if an
analytic change of coordinates makes \(f\) weighted homogeneous with
positive weights \cite{Saito1971}. In such coordinates there are
positive rational numbers \(w_1,\ldots,w_n\) and \(d>0\) for which the
weighted Euler identity
\[
d f=\sum_{i=1}^n w_i x_i\partial_i f
\]
holds. Saito's criterion therefore unifies the numerical equality
\(\mu_f=\tau_f\), the ideal-membership condition \(f\in J_f\), and
quasihomogeneity. When \(\mu_f>\tau_f\), the difference is precisely the
dimension of the principal ideal \([f](R/J_f)\) in the Milnor algebra.
Known comparisons include \(\mu_f\leq n\tau_f\) \cite{Liu2018} and, for
plane curves, \(\mu_f<2\tau_f\) \cite{DimcaGreuel2018}.

Hassanzadeh, Nasrollah Nejad, and Simis studied quasihomogeneous isolated
singularities through syzygies and foliations and formulated
\begin{equation}\label{eq:hns-question}
J_f:f\not\subseteq(J_f,f).
\end{equation}
They established the plane-curve case
\cite[Conjecture~3.6 and Proposition~3.7]{HassanzadehNasrollahNejadSimis2025}.
The problem asks for an actual direction in \(J_f:f\) that cannot be
absorbed by \(J_f\) together with \(f\), rather than merely a comparison
of two lengths.

\section{Exact zero divisors and the finite-dimensional trace}
\label{sec:finite-trace}

Return to the isolated-singularity setting of Section~\ref{sec:background}
and put
\[
A=R/J_f,\qquad a=[f]\in A.
\]
The colon ideal
\(J_f:f=\{g\in R:gf\in J_f\}\) and the annihilator are related by
\begin{equation}\label{eq:colon-dictionary-background}
(J_f:f)/J_f=\Ann_A(a),\qquad (J_f,f)/J_f=aA.
\end{equation}
Consequently, \eqref{eq:hns-question} is equivalent to
\(\Ann_A(a)\not\subseteq aA\). Since \(A\) is Artinian local, \(a\) is
nilpotent. Inspecting the Jordan blocks of the multiplication operator
\(m_a\) gives
\[
\Ann_A(a)\not\subseteq aA
\quad\Longleftrightarrow\quad
m_a\text{ has a Jordan block of size one}.
\]
This is the linear-algebraic form of the original problem. Jordan type
or Frobenius self-adjointness alone does not exclude the possibility
that all blocks have size at least two; the additional constraint is
the differential identity \(d_{A/k}a=0\). We start from this observation
and subsequently pass from finite-length algebras to arbitrary
Noetherian quotient rings.

For an element \(b\) of a ring \(B\), write
\(\Ann_B(b)=\{x\in B:xb=0\}\). If \(B\) is a commutative \(L\)-algebra,
write
\[
d_{B/L}:B\longrightarrow\Omega_{B/L}
\]
for the universal K\"ahler derivation. When the algebra and base field
are determined by context, we abbreviate \(d_{B/L}(x)\) to \(dx\). Thus,
in Lemma~\ref{lem:finite-trace}, \(dc\) means \(d_{E/L}(c)\), whereas
\(d\varepsilon\) means \(d_{D/L}(\varepsilon)\); in
Theorem~\ref{thm:conormal-rigidity}, \(dc\) means \(d_{C/k}(c)\). For a
matrix \(X=(x_{ij})\) with entries in \(D\), set
\(dX=(d_{D/L}x_{ij})\), taking the differential entrywise.

\begin{lemma}\label{lem:colon-exact}
Let \(A\) be a commutative ring and let \(0\ne a\in A\) be nilpotent.
If \(\Ann_A(a)\subseteq aA\), then
\[
\Ann_A(a)=a\Ann_A(a^2).
\]
Consequently, if \(C=A/(a^2)\) and \(c=a\bmod(a^2)\), then
\begin{equation}\label{eq:self-exact}
c\ne0,\qquad c^2=0,\qquad\Ann_C(c)=cC.
\end{equation}
\end{lemma}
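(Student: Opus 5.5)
We first prove the identity \(\Ann_A(a)=a\Ann_A(a^2)\) by proving the two inclusions, and then pass to \(C=A/(a^2)\).

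\emph{The inclusion \(a\Ann_A(a^2)\subseteq\Ann_A(a)\).} If \(y\in\Ann_A(a^2)\), then \(a\cdot(ay)=a^2y=0\). Hence \(ay\in\Ann_A(a)\).

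\emph{The inclusion \(\Ann_A(a)\subseteq a\Ann_A(a^2)\).} Let \(x\in\Ann_A(a)\). By hypothesis \(x=ay\) for some \(y\in A\). Then \(a^2y=ax=0\), so \(y\in\Ann_A(a^2)\) and \(x\in a\Ann_A(a^2)\). Note that this step does not use nilpotency.

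\emph{The statement about \(C\).} We have \(c^2=0\) trivially, and \(cC\subseteq\Ann_C(c)\) follows from \(c^2=0\).

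For the reverse inclusion, take \(\bar z\in\Ann_C(c)\) with lift \(z\in A\). Then \(az\in(a^2)\), say \(az=a^2w\). Thus \(a(z-aw)=0\), so \(z-aw\in\Ann_A(a)\subseteq aA\). It follows that \(z\in aA\), and hence \(\bar z\in cC\).

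\emph{The claim \(c\ne0\).} This is the only place where nilpotency enters, and it is the one delicate point. Suppose \(c=0\), i.e.\ \(a\in(a^2)\), say \(a=a^2u\). Iterating gives \(a=a^{m}u^{m-1}\) for all \(m\ge2\). Since \(a\) is nilpotent, \(a^m=0\) for large \(m\), which forces \(a=0\), contradicting \(a\ne0\). Hence \(c\ne0\).

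None of the steps is a real obstacle. The only thing to take care with is which hypothesis is used where: the containment \(\Ann_A(a)\subseteq aA\) drives both equalities, while nilpotency together with \(a\ne0\) is needed only for \(c\ne0\).
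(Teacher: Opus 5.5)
Your proof is correct and follows the paper's argument step for step: you prove both inclusions for \(\Ann_A(a)=a\Ann_A(a^2)\), you use the same lifting argument (\(az=a^2w\), so \(z-aw\in\Ann_A(a)\)) for \(\Ann_C(c)=cC\), and you use nilpotency only to get \(c\ne0\). The one cosmetic difference is in that last step: the paper writes \(a(1-au)=0\) and observes that \(1-au\) is a unit, whereas you iterate to \(a=a^{m}u^{m-1}\); the two arguments are equivalent.
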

\begin{proof}
If \(x\in\Ann_A(a)\), the assumption gives \(x=ay\), and then
\(a^2y=0\). Hence
\(\Ann_A(a)\subseteq a\Ann_A(a^2)\). Conversely, if
\(x=ay\in a\Ann_A(a^2)\), then \(a^2y=0\), so
\(ax=a^2y=0\). Thus \(x\in\Ann_A(a)\), proving the reverse inclusion.

If \(c=0\), then
\(a=a^2u\), so \(a(1-au)=0\). Since \(au\) is nilpotent, \(1-au\) is a
unit, contradicting \(a\ne0\). If \(c\bar x=0\), lift \(\bar x\) to
\(x\in A\). There is a \(y\) with \(ax=a^2y\), and hence
\[
x-ay\in\Ann_A(a)=a\Ann_A(a^2).
\]
Reduction modulo \(a^2A\) yields \(\bar x\in cC\). The reverse
inclusion can be checked directly: if \(\bar x=c\bar y\), then
\(c\bar x=c^2\bar y=0\), so \(\bar x\in\Ann_C(c)\).
\end{proof}

\begin{lemma}\label{lem:finite-trace}
Let \(L\) be a field of characteristic zero and let \(E\) be a
finite-dimensional commutative \(L\)-algebra. If \(0\ne c\in E\)
satisfies \(c^2=0\) and \(\Ann_E(c)=cE\), then
\[
d_{E/L}c\notin c\Omega_{E/L}.
\]
\end{lemma}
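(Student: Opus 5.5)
The plan is to realize \(E\) as a free module over the dual numbers \(D=L[\varepsilon]/(\varepsilon^2)\), with \(\varepsilon\) acting as \(c\). We then build a trace-type \(L\)-linear map \(\psi:\Omega_{E/L}\to\Omega_{D/L}\) that kills \(c\Omega_{E/L}\) but not \(dc\). The characteristic-zero hypothesis enters only at the end, through the nonvanishing of an integer multiple of \(d\varepsilon\).

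\emph{Step 1 (freeness).} Make \(E\) a \(D\)-algebra via \(\varepsilon\mapsto c\), which is possible because \(c^2=0\). Then \(E\) is a finitely generated module over the Artinian local ring \(D\), whose residue field is \(L\). The complex \(\cdots\to D\xrightarrow{\varepsilon}D\xrightarrow{\varepsilon}D\to L\to0\) is a free resolution of \(L\). Tensoring it with \(E\) gives
\[
\operatorname{Tor}_1^D(L,E)=\Ann_E(c)/cE.
\]
This vanishes by hypothesis, so \(E\) is flat over \(D\) and hence free, say \(E\cong D^m\).

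We have \(m\ge1\) because \(E\neq0\); indeed \(c\ne0\). Fix such a basis. For \(x\in E\), let \(X\in M_m(D)\) be the matrix of multiplication by \(x\). The assignment \(x\mapsto X\) is an \(L\)-algebra homomorphism with \(X(c)=\varepsilon I_m\). Recall that \(\Omega_{D/L}=D\,d\varepsilon/(2\varepsilon\,d\varepsilon)\cong L\,d\varepsilon\). Since \(2\) is invertible in \(L\), this gives \(d\varepsilon\ne0\) and \(\varepsilon\,\Omega_{D/L}=0\).

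\emph{Step 2 (the trace pairing).} Define \(\psi\) on generators by
\[
\psi(x\,dy)=\tr\bigl(X\,dY\bigr)\in\Omega_{D/L}.
\]
To see that this is well defined, present \(\Omega_{E/L}\) as \(E\otimes_L E\) modulo the Leibniz relations \(x\,d(yz)=xz\,dy+xy\,dz\). The map \((x,y)\mapsto\tr(X\,dY)\) is \(L\)-bilinear. It respects the Leibniz relations because \(d(YZ)=dY\,Z+Y\,dZ\) entrywise and \(\tr(X\,dY\,Z)=\tr(XZ\,dY)\) by cyclicity, using commutativity of \(E\), so that \(XZ=ZX\). Constants satisfy \(dY=0\), so the relation \(d(L)=0\) is respected as well.

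\emph{Step 3 (contradiction).} Suppose \(dc=c\omega\) with \(\omega=\sum_i g_i\,dh_i\). Apply \(\psi\) to both sides.

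On the left,
\[
\psi(dc)=\tr(d(\varepsilon I_m))=m\,d\varepsilon\ne0,
\]
which is nonzero since \(\operatorname{char}L=0\) and \(m\ge1\). On the right,
\[
\psi(c\omega)=\sum_i\tr(\varepsilon G_i\,dH_i)=\varepsilon\sum_i\tr(G_i\,dH_i)=0,
\]
because \(\varepsilon\) annihilates \(\Omega_{D/L}\). These two values are incompatible, so \(dc\notin c\,\Omega_{E/L}\).

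I expect the main obstacle to be Step 2. The concern is making sure \(\psi\) is genuinely well defined on \(\Omega_{E/L}\) rather than on the free module on symbols \(dy\), and in particular that the cyclic-trace manipulation is legitimate for matrices whose entries lie in the \(D\)-module \(\Omega_{D/L}\) and not in a ring. The freeness in Step 1 is standard once the Tor computation is written down.
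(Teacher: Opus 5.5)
Your proposal is correct and is essentially the paper's proof: you realize \(E\) as a free module over \(D=L[\varepsilon]/(\varepsilon^2)\), define the same trace map \(\Omega_{E/L}\to\Omega_{D/L}\), \(x\,dy\mapsto\tr(X\,dY)\), via the regular representation, check it against the Leibniz presentation using cyclicity and commutativity, and compare \(m\,d\varepsilon\neq0\) with \(\varepsilon\,\Omega_{D/L}=0\). The only deviation is that you obtain freeness from \(\operatorname{Tor}_1^D(L,E)=\Ann_E(c)/cE=0\) rather than from the Jordan form of \(m_c\) (all blocks of size two); this is an equally valid route to \(E\simeq D^m\).
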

\begin{proof}
Let \(D=L[\varepsilon]/(\varepsilon^2)\), with
\(\varepsilon\mapsto c\). This map is injective. Multiplication by
\(c\) has square zero and
\[
\ker(m_c)=\Ann_E(c)=cE=\operatorname{im}(m_c).
\]
Thus its Jordan normal form consists only of size-two blocks. If there
are \(r>0\) blocks, then \(E\simeq D^r\) as a \(D\)-module.

Consider the regular representation
\[
\rho:E\longrightarrow\operatorname{End}_D(E)\simeq M_r(D).
\]
Define
\[
T:\Omega_{E/L}\longrightarrow\Omega_{D/L},\qquad
T(b\,dx)=\tr\bigl(\rho(b)d\rho(x)\bigr).
\]
We spell out the descent to K\"ahler differentials. Additivity in
\(b\) follows from additivity of matrix trace and of \(\rho\), while
additivity in \(x\) follows from additivity of \(\rho\) and entrywise
differentiation. If \(\lambda\in L\), then
\(\rho(\lambda)=\lambda I_r\), so
\(d\rho(\lambda)=d(\lambda I_r)=0\), and hence
\(T(b\,d\lambda)=0\). For the Leibniz relation, one has
\begin{align*}
T\bigl(b\,d(xy)\bigr)
&=\tr\!\left(\rho(b)d\bigl(\rho(x)\rho(y)\bigr)\right)\\
&=\tr\!\left(\rho(b)d\rho(x)\rho(y)\right)
  +\tr\!\left(\rho(b)\rho(x)d\rho(y)\right)\\
&=\tr\!\left(\rho(by)d\rho(x)\right)
  +\tr\!\left(\rho(bx)d\rho(y)\right)\\
&=T(by\,dx)+T(bx\,dy).
\end{align*}
Here the third equality uses commutativity of \(E\) and cyclicity of
matrix trace; it does not require the entries of \(d\rho(x)\) to
commute with those of \(\rho(y)\). Hence the formula respects the
defining presentation of \(\Omega_{E/L}\). Moreover, for
\(\delta\in D\),
\[
T(\delta b\,dx)=\delta T(b\,dx),
\]
because \(\rho(\delta)=\delta I_r\). Thus \(T\) is \(D\)-linear. Since
\(\rho(c)=\varepsilon I_r\),
\begin{equation}\label{eq:trace-c}
T(dc)=r\,d\varepsilon.
\end{equation}
If \(dc=c\omega\), then \(T(dc)=\varepsilon T(\omega)=0\), because
\[
\Omega_{D/L}=L\,d\varepsilon,\qquad
\varepsilon\Omega_{D/L}=0.
\]
This contradicts \eqref{eq:trace-c}, since \(r>0\) and
\(\operatorname{char}L=0\).
\end{proof}

\begin{remark}
Jordan normal form is used only to establish freeness over the dual
numbers. The differential condition is detected by the matrix trace of
the regular representation. No Frobenius pairing, Gorenstein
hypothesis, or complete-intersection hypothesis is required.
\end{remark}

\section{Conormal rigidity over Noetherian algebras}
\label{sec:noether-rigidity}

This section reduces the Noetherian case to the finite-dimensional one.
Localization at a minimal prime produces an Artinian local ring, and a
copy of its residue field inside the ring then places it within the
scope of Lemma~\ref{lem:finite-trace}. The following coefficient-field
lemma supplies the second step.

\begin{lemma}\label{lem:coefficient-field}
Let \(k\) be a field of characteristic zero and let \((E,\mathfrak m,L)\)
be an Artinian local \(k\)-algebra with residue field
\(L=E/\mathfrak m\). Then the residue map \(E\twoheadrightarrow L\)
admits a \(k\)-algebra section.
\end{lemma}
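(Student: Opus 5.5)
The plan is to prove the Cohen structure theorem for Artinian local rings in characteristic zero by a two-stage lifting argument. First I construct a section over a purely transcendental part of \(L/k\). Then I extend it across the algebraic part using Hensel's lemma, which applies because \(\mathfrak m\) is nilpotent.

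\emph{Step 1: a maximal subfield.} The subring \(k\subseteq E\) is a field. Every nonzero element of \(k\) is a unit in \(E\), so \(k\cap\mathfrak m=0\) and \(k\) maps isomorphically into \(L\). I apply Zorn's lemma to the set of subfields \(K\subseteq E\) containing \(k\), ordered by inclusion. A union of a chain of subfields is again a subfield, so a maximal such \(K\) exists. Each such \(K\) meets \(\mathfrak m\) only in \(0\), because its nonzero elements are units. Hence the residue map restricts to an injective \(k\)-algebra map \(K\hookrightarrow L\). It remains to show that this map is surjective, and then the section is \(L\cong K\subseteq E\).

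\emph{Step 2: \(L\) is algebraic over the image \(\bar K\).} Suppose \(\bar t\in L\) is transcendental over \(\bar K\), and lift it to \(t\in E\). For any nonzero polynomial \(p\in K[X]\), the residue of \(p(t)\) is \(\bar p(\bar t)\ne0\), so \(p(t)\notin\mathfrak m\) and \(p(t)\) is a unit. Therefore \(K[t]\) is a polynomial ring and \(K(t)\) embeds in \(E\). This contradicts the maximality of \(K\).

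\emph{Step 3: \(L=\bar K\), via Hensel lifting.} This is the main obstacle. Take \(\bar\alpha\in L\) algebraic over \(\bar K\), and let \(\bar g\) be its minimal polynomial. Since the characteristic is zero, \(\bar g\) is separable, so \(\bar g'(\bar\alpha)\ne0\). Let \(g\in K[X]\) be the corresponding polynomial under \(K\cong\bar K\). Because \(\mathfrak m\) is nilpotent, Newton iteration converges in finitely many steps. Starting from any lift \(\alpha_0\) of \(\bar\alpha\), I set
\[
\alpha_{j+1}=\alpha_j-g(\alpha_j)g'(\alpha_j)^{-1}.
\]
Here \(g'(\alpha_j)\) is a unit because its residue is \(\bar g'(\bar\alpha)\ne0\). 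By Taylor expansion, \(g(\alpha_j)\in\mathfrak m^{2^j}\), so some \(\alpha=\alpha_N\) satisfies \(g(\alpha)=0\) and \(\alpha\equiv\alpha_0\pmod{\mathfrak m}\).

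Next I check that \(K[\alpha]\) is a field. The polynomial \(g\) is irreducible over \(K\), so \(K[X]/(g)\) is a field. The map \(K[X]/(g)\to E\) sending \(X\mapsto\alpha\) is nonzero and hence injective. Its image \(K[\alpha]\) is therefore a field containing \(K\), so maximality forces \(\alpha\in K\). Hence \(\bar\alpha\in\bar K\), and \(\bar K=L\).

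The residue map composed with \(L\xrightarrow{\sim}K\hookrightarrow E\) is the identity on \(L\), and it is \(k\)-linear because \(k\subseteq K\). This gives the required \(k\)-algebra section. Characteristic zero is used only to guarantee separability in Step 3.
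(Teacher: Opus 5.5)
Your proof is correct, but it takes a different route from the paper. The paper argues via formal smoothness. It chooses a transcendence basis \(\mathcal T\) of \(L/k\) and notes that \(k(\mathcal T)\) is a localization of a polynomial algebra, hence formally smooth over \(k\). It notes that \(L/k(\mathcal T)\) is a filtered union of finite \'etale extensions, hence formally \'etale. It concludes that \(L/k\) is formally smooth and lifts \(\operatorname{id}_L\) through the nilpotent ideal \(\mathfrak m\) in a single step.

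You instead run the classical Cohen argument inside \(E\): take a Zorn-maximal subfield \(K\supseteq k\), use a transcendence step to force \(L/\bar K\) to be algebraic, and use Newton iteration (which terminates because \(\mathfrak m^N=0\)) to force \(L=\bar K\). Your Steps~2 and~3 are hands-on versions of ``polynomial algebras are formally smooth'' and ``separable algebraic extensions are formally \'etale.'' Zorn's lemma replaces the passage to filtered unions, so you never need to check that lifts on the pieces of a union are compatible. In the paper's route that issue is harmless only because the algebraic part has \emph{unique} lifts.

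Your approach gives a fully elementary, self-contained proof. It also yields the extra fact that in characteristic zero every maximal subfield of \(E\) containing \(k\) is a coefficient field. The paper's approach is shorter once the standard formal-smoothness toolkit is available. It also isolates the property actually used, namely formal smoothness of \(L/k\), and that formulation carries over to characteristic \(p\) for separable \(L/k\). The maximal-subfield statement does not carry over: in \(\mathbb F_p(t)[\varepsilon]/(\varepsilon^2)\), any maximal subfield containing \(t^p+\varepsilon\) maps onto \(\mathbb F_p(t^p)\neq\mathbb F_p(t)\).
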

\begin{proof}
Choose a transcendence basis \(\mathcal T\) of \(L/k\). Then \(L\) is
algebraic over \(k(\mathcal T)\), and it is separable because the
characteristic is zero. The algebra \(k(\mathcal T)\) is a localization
of a polynomial algebra (interpreted as the filtered union over finite
subsets of \(\mathcal T\)), hence is formally smooth over \(k\).
Every finite subextension of the separable algebraic extension
\(L/k(\mathcal T)\) is finite \'etale, and therefore formally
\'etale. Passing to the filtered union shows that \(L\) is formally
smooth over \(k\).

The ideal \(\mathfrak m\) is nilpotent because \(E\) is Artinian. Apply
the lifting property for formal smoothness to the \(k\)-algebra map
\(\operatorname{id}_L:L\to E/\mathfrak m=L\). It lifts to a
\(k\)-algebra map \(s:L\to E\). The composite
\(L\xrightarrow{s}E\twoheadrightarrow L\) is the identity, so \(s\) is
the required section.
\end{proof}

Once the coefficient field has been chosen, the localized Artinian ring
can be regarded as a finite-dimensional algebra over its residue field.
We can now lift the finite-dimensional obstruction of the preceding
section to Noetherian algebras.

\begin{theorem}\label{thm:conormal-rigidity}
Let \(k\) be a field of characteristic zero and let \(C\) be a
Noetherian commutative \(k\)-algebra. If \(0\ne c\in C\) satisfies
\[
c^2=0,\qquad\Ann_C(c)=cC,
\]
then
\[
\boxed{dc\notin c\Omega_{C/k}.}
\]
In particular, \(dc\ne0\).
\end{theorem}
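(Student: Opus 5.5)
The plan is to reduce to Lemma~\ref{lem:finite-trace} by localizing at a minimal prime of \(C\) and then choosing a coefficient field with Lemma~\ref{lem:coefficient-field}. Argue by contradiction and suppose \(dc=c\omega\) for some \(\omega\in\Omega_{C/k}\).

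\emph{Step 1: choice of prime.} Since \(c^2=0\), we have \(\Ann_C(c)=cC\subseteq\sqrt{0}\). Hence \(V(\Ann_C(c))=\operatorname{Spec}C\). The cyclic module \(cC\simeq C/\Ann_C(c)\) has support \(V(\Ann_C(c))\), so it is supported at every prime. Fix a minimal prime \(\mathfrak p\) of \(C\) and put \(E=C_{\mathfrak p}\). Then \(E\) is a Noetherian local ring of dimension zero, hence Artinian local, and \(c/1\ne0\) in \(E\). Clearly \((c/1)^2=0\).

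\emph{Step 2: the hypotheses localize.} Since \(C\) is Noetherian, \(\Ann_C(c)\) is the kernel of \(m_c:C\to C\), and localization is exact. Therefore \(\Ann_E(c/1)=\Ann_C(c)_{\mathfrak p}=cE\). For Kähler differentials we have \(\Omega_{E/k}\simeq(\Omega_{C/k})_{\mathfrak p}\), compatibly with \(d\). So the relation \(dc=c\omega\) gives \(d_{E/k}(c/1)=(c/1)(\omega/1)\).

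\emph{Step 3: pass to a finite-dimensional algebra.} By Lemma~\ref{lem:coefficient-field}, the residue field \(L\) of \(E\) embeds in \(E\) as a \(k\)-subalgebra, and this makes \(E\) an \(L\)-algebra. Each quotient \(\mathfrak m^i/\mathfrak m^{i+1}\) is a finitely generated module over \(E/\mathfrak m=L\), and \(\mathfrak m\) is nilpotent. Hence \(\dim_L E<\infty\). Because \(k\subseteq L\), there is a canonical surjection \(\Omega_{E/k}\to\Omega_{E/L}\) of \(E\)-modules that sends \(d_{E/k}x\mapsto d_{E/L}x\). Applying it to the relation from Step~2 gives \(d_{E/L}(c/1)\in(c/1)\Omega_{E/L}\). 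This contradicts Lemma~\ref{lem:finite-trace} applied to \(E\) over \(L\). The final assertion \(dc\ne0\) follows because \(0\in c\Omega_{C/k}\).

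The only delicate point is Step~1: we must find a localization that is Artinian and in which \(c\) survives. The observation that \(\Ann_C(c)=cC\) is nil settles this, because it forces \(cC\) to have full support. The remaining steps are standard compatibilities of annihilators and differentials with localization and base change.
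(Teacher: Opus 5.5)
Your proposal is correct and follows the paper's proof essentially step for step: localize at a minimal prime, check that \(c/1\) survives because \(\Ann_C(c)=cC\) is nil, take a coefficient field via Lemma~\ref{lem:coefficient-field}, and push \(dc=c\omega\) through \(\Omega_{E/k}\twoheadrightarrow\Omega_{E/L}\) to contradict Lemma~\ref{lem:finite-trace}. Your support argument for \(c/1\ne0\) is the module-theoretic form of the paper's elementwise one (\(sc=0\) would force \(s\in cC\subseteq\mathfrak p\)), and identifying \(\Ann_C(c)\) with \(\ker m_c\) does not actually need the Noetherian hypothesis.
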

\begin{proof}
Suppose that \(dc=c\omega\). Choose a minimal prime
\(\mathfrak p\) of \(C\) and set \(E=C_{\mathfrak p}\). Then \(E\) is
a zero-dimensional Noetherian local ring, hence an Artinian local ring.

The element \(c/1\) is nonzero in \(E\). Otherwise \(sc=0\) for some
\(s\notin\mathfrak p\), while
\[
s\in\Ann_C(c)=cC\subseteq\mathfrak p,
\]
a contradiction. Localization preserves
\[
(c/1)^2=0,\qquad\Ann_E(c/1)=(c/1)E.
\]

Let \(L=E/\mathfrak m_E\). By Lemma~\ref{lem:coefficient-field}, the
residue map has a \(k\)-algebra section \(L\hookrightarrow E\). Since
the descending chain
\[
E\supseteq\mathfrak m_E\supseteq\cdots\supseteq
\mathfrak m_E^N=0
\]
has finite length and each quotient
\(\mathfrak m_E^i/\mathfrak m_E^{i+1}\) is a finite-dimensional
\(L\)-vector space, \(E\) is a finite-dimensional commutative
\(L\)-algebra.

Localizing \(dc=c\omega\) and applying the natural surjection
\(\Omega_{E/k}\twoheadrightarrow\Omega_{E/L}\) gives
\[
d_{E/L}c=c\omega'.
\]
This contradicts Lemma~\ref{lem:finite-trace}.
\end{proof}

\section{Colon-ideal theorems}\label{sec:colon-theorems}

Theorem~\ref{thm:conormal-rigidity} concerns the differential of a
self-exact square-zero element. We now translate it back into a colon-
ideal statement. If the desired noncontainment were to fail,
Lemma~\ref{lem:colon-exact} would produce such a square-zero element in
a quotient ring, while differential vanishing would contradict
conormal rigidity. This proves the first main theorem.

\begin{theorem}\label{thm:noetherian-colon}
Let \(k\) be a field of characteristic zero, let \(R\) be a commutative
\(k\)-algebra, let \(I\subsetneq R\), and assume that \(A=R/I\) is
Noetherian. For \(f\in R\), write \(a=[f]\in A\). If \(a\) is
nilpotent and \(d_{A/k}a=0\), then
\[
I:f\not\subseteq(I,f).
\]
\end{theorem}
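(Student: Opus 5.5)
We argue by contradiction, following the strategy announced at the start of Section~\ref{sec:colon-theorems}. Assume \(I:f\subseteq(I,f)\). Via the dictionary \eqref{eq:colon-dictionary-background}, which holds verbatim for any ideal \(I\) (namely \((I:f)/I=\Ann_A(a)\) and \((I,f)/I=aA\)), the assumption becomes \(\Ann_A(a)\subseteq aA\).

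First we dispose of the degenerate case \(a=0\). Then \(f\in I\), so \(I:f=R\), while \((I,f)=I\ne R\) because \(I\subsetneq R\). Hence \(I:f\not\subseteq(I,f)\) holds trivially. So we may assume \(a\ne0\); it is nilpotent by hypothesis.

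Lemma~\ref{lem:colon-exact} now applies. With \(C=A/(a^2)\) and \(c=a\bmod(a^2)\), it yields \(c\ne0\), \(c^2=0\) and \(\Ann_C(c)=cC\). The ring \(C\) is Noetherian as a quotient of the Noetherian ring \(A\), and it is a commutative \(k\)-algebra. Theorem~\ref{thm:conormal-rigidity} therefore gives \(d_{C/k}c\notin c\,\Omega_{C/k}\), and in particular \(d_{C/k}c\ne0\).

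On the other hand, the quotient map \(\pi:A\to C\) induces a map \(\Omega_{A/k}\to\Omega_{C/k}\) sending \(d_{A/k}x\) to \(d_{C/k}\pi(x)\), by functoriality of Kähler differentials. Applying it to \(d_{A/k}a=0\) gives \(d_{C/k}c=0\in c\,\Omega_{C/k}\), a contradiction. Hence \(\Ann_A(a)\not\subseteq aA\), which is the claim.

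There is no real obstacle, since all the substance is already in Theorem~\ref{thm:conormal-rigidity}. The points to check carefully are only that the colon dictionary is valid for a general \(I\) and \(R\), and that the case \(a=0\) is excluded before invoking Lemma~\ref{lem:colon-exact}, which requires \(a\ne0\).
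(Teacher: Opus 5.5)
Your proof is correct and follows the paper's argument essentially verbatim. You dispose of \(a=0\) directly, translate the containment into \(\Ann_A(a)\subseteq aA\), use Lemma~\ref{lem:colon-exact} to obtain a self-exact square-zero element \(c\) in the Noetherian \(k\)-algebra \(C=A/(a^2)\), and contradict Theorem~\ref{thm:conormal-rigidity} via functoriality of K\"ahler differentials. Your explicit checks that the colon dictionary holds for a general ideal \(I\) and that \(C\) is Noetherian are exactly the points the paper leaves implicit.
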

\begin{proof}
If \(a=0\), then \(I:f=R\), while \((I,f)=I\ne R\). Assume that
\(a\ne0\) and, toward a contradiction, that
\(I:f\subseteq(I,f)\). Modulo \(I\), this is
\(\Ann_A(a)\subseteq aA\). Lemma~\ref{lem:colon-exact} produces, in
\(C=A/(a^2)\), an element \(c=a\bmod(a^2)\) satisfying
\[
c\ne0,\qquad c^2=0,\qquad\Ann_C(c)=cC.
\]
Functoriality of K\"ahler differentials gives \(d_{C/k}c=0\), contrary
to Theorem~\ref{thm:conormal-rigidity}.
\end{proof}

The preceding theorem produces an element of the colon ideal that is
not absorbed by \((I,f)\), but it does not yet show that the element can
be chosen as a minimal generator direction. Over a local ring,
Nakayama's lemma gives the following strengthening.

\begin{theorem}\label{thm:local-strengthening}
Under the assumptions of Theorem~\ref{thm:noetherian-colon}, suppose
that \((R,\mathfrak n)\) is local and put \(K=I:f\). Then
\[
\boxed{K\not\subseteq(I,f)+\mathfrak nK.}
\]
\end{theorem}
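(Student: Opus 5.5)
The plan is to argue by contradiction and reduce to Theorem~\ref{thm:noetherian-colon} by Nakayama's lemma. The lemma cannot be applied to \(K\) directly, since \(R\) need not be Noetherian and \(K\) need not be finitely generated. So I will apply it after passing to \(A=R/I\), where the relevant module is finitely generated.

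Suppose that \(K\subseteq(I,f)+\mathfrak nK\). First I will show that \(K=(K\cap(I,f))+\mathfrak nK\).
\begin{itemize}
\item The inclusion \(\supseteq\) is clear.
\item For \(\subseteq\), take \(x\in K\) and write \(x=y+z\) with \(y\in(I,f)\) and \(z\in\mathfrak nK\subseteq K\).
\item Then \(y=x-z\in K\), so \(y\in K\cap(I,f)\).
\end{itemize}

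Next I pass to \(A\). Since \(I\subsetneq R\) and \(R\) is local, \(A\) is local with maximal ideal \(\mathfrak m_A=\mathfrak n/I\). Set
\[
M=K/I=\Ann_A(a),\qquad N=(K\cap(I,f))/I\subseteq M .
\]
Here \(I\subseteq K\cap(I,f)\) because \(I\subseteq I:f\). The image of \(\mathfrak nK\) in \(A\) is \(\mathfrak m_AM\), so the identity above becomes \(M=N+\mathfrak m_AM\). Because \(A\) is Noetherian, the ideal \(M=\Ann_A(a)\) is finitely generated. Hence \(M/N\) is a finitely generated \(A\)-module satisfying \(M/N=\mathfrak m_A(M/N)\).

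Nakayama's lemma over the local ring \(A\) then gives \(M=N\). Lifting back to \(R\), this says \(K\subseteq(I,f)+I=(I,f)\), that is, \(I:f\subseteq(I,f)\). This contradicts Theorem~\ref{thm:noetherian-colon}, whose hypotheses are exactly those assumed here. The trivial case \(a=0\) needs no separate treatment, since that theorem already covers it.

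The only delicate point is finite generation, which is why Nakayama must be applied to \(\Ann_A(a)\) in the Noetherian ring \(A\) rather than to \(K\) in \(R\). One then checks that \(\mathfrak nK\) maps onto \(\mathfrak m_A\Ann_A(a)\), which is immediate. Beyond that, the argument is routine.
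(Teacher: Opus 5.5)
Your proof is correct and is essentially the paper's argument. Your quotient \(M/N\), with \(N=(K\cap(I,f))/I=M\cap aA\), is isomorphic to the paper's \(E=(M+aA)/aA\), and both proofs apply Nakayama over the local Noetherian ring \(A\) to this finitely generated module to contradict Theorem~\ref{thm:noetherian-colon}.
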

\begin{proof}
Put
\[
M=K/I=\Ann_A(a),\qquad E=(M+aA)/aA,
\]
and let \(\mathfrak m=\mathfrak n/I\). Theorem
\ref{thm:noetherian-colon} says that \(E\ne0\). If
\[
K\subseteq(I,f)+\mathfrak nK,
\]
then reduction modulo \(I\) gives \(M\subseteq aA+\mathfrak mM\).
The reverse containment is not asserted; after passing to
\((M+aA)/aA\), however, the displayed inclusion becomes
\[
E\subseteq\mathfrak mE.
\]
The reverse inclusion \(\mathfrak mE\subseteq E\) is automatic, hence
\(E=\mathfrak mE\). The module \(M=\Ann_A(a)\) is an ideal of the
Noetherian ring \(A\), so \(M\), and therefore its quotient \(E\), is
finitely generated. Nakayama's lemma would imply \(E=0\), a
contradiction.
\end{proof}

\begin{remark}\label{rem:minimal-generator}
The local theorem provides \(g\in I:f\) such that
\[
g\notin(I,f)+\mathfrak n(I:f).
\]
Thus \(g\) determines a genuine minimal generator direction of \(I:f\)
that is not absorbed by the direction of \(f\).
\end{remark}

\section{Jacobian ideals and critical loci}\label{sec:applications}

We now return to the original Jacobian-ideal problem from
Section~\ref{sec:background}. To apply
Theorem~\ref{thm:local-strengthening} to
\[
A=k[[x_1,\ldots,x_n]]/J_f,\qquad a=[f],
\]
we must verify that \(A\) is Noetherian, that \(a\) is nilpotent, and
that \(d_{A/k}a=0\). When \(J_f\) is \(\mathfrak m\)-primary, the first
two statements follow because \(A\) is Artinian local. Differential
vanishing requires a separate argument.

One cannot, without further justification, write the differential of a
formal power series as \(df=\sum_i(\partial_i f)\,dx_i\). We use ordinary
K\"ahler differentials, and ordinary derivations need not be continuous
for the \(\mathfrak m\)-adic topology, so the infinite-series formula is
not directly available. Instead, modulo \(J_f\), we replace \(f\) by a
finite polynomial truncation, for which the usual finite
differentiation formula applies. The next lemma carries this out.

\begin{lemma}\label{lem:finite-jet}
Let \(R=k[[x_1,\ldots,x_n]]\), assume that \(J_f\) is
\(\mathfrak m\)-primary, and set \(A=R/J_f\) and \(a=[f]\in A\).
Then \(d_{A/k}a=0\).
\end{lemma}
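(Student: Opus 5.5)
Since \(J_f\) is \(\mathfrak m\)-primary, choose \(N\) with \(\mathfrak m^N\subseteq J_f\). Then \(A=R/J_f\) is a quotient of the finite-dimensional algebra \(R/\mathfrak m^{N+1}\). The plan is to replace \(f\) by its polynomial truncation and use the polynomial presentation of \(A\), where Kähler differentials are computed by finite formulas.

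Let \(P=k[x_1,\ldots,x_n]\), and let \(g\in P\) be the truncation of \(f\) of degree \(\le N\). Then \(f-g\in\mathfrak m^{N+1}\), and \(\partial_i f-\partial_i g\in\mathfrak m^{N}\subseteq J_f\). The natural map \(P\to R\to A\) is surjective, because every power series agrees modulo \(\mathfrak m^N\subseteq J_f\) with a polynomial. Its kernel \(Q\) contains \(\mathfrak m_P^N\) and the polynomials \(\partial_i g\), since \(\partial_i g\equiv\partial_i f\equiv 0\) in \(A\). The image of \(g\) in \(A\) is \(a\), because \(f-g\in\mathfrak m^{N+1}\subseteq J_f\). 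So \(A\cong P/Q\) with \(a=[g]\).

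Next, use the standard conormal presentation for a quotient of a polynomial ring:
\[
\Omega_{A/k}\cong\Bigl(\textstyle\bigoplus_i A\,dx_i\Bigr)\big/\bigl(d h: h\in Q\bigr).
\]
Here \(g\) is a genuine polynomial, so the finite Leibniz rule gives \(d_{P/k}g=\sum_i(\partial_i g)\,dx_i\). Mapping to \(\Omega_{A/k}\) yields
\[
d_{A/k}a=\sum_i[\partial_i g]\,dx_i.
\]
Each coefficient \([\partial_i g]\) is \(0\) in \(A\), so \(d_{A/k}a=0\). In fact the relations \(dh\) are not needed: the coefficients already vanish in \(A\), and only the functoriality map \(\Omega_{P/k}\otimes_P A\to\Omega_{A/k}\) is used.

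The one delicate step is the one flagged before the lemma: \(\Omega_{A/k}\) must be computed through the polynomial surjection \(P\twoheadrightarrow A\), never through \(R\). Two facts need checking. First, \(P\to A\) is surjective; this follows from \(\mathfrak m^N\subseteq J_f\). Second, the truncation error is absorbed on both \(f\) and its partials: \(f-g\in\mathfrak m^{N+1}\) and \(\partial_i(f-g)\in\mathfrak m^N\), and both lie in \(J_f\).

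Once the equality \(a=[g]\) holds in \(A\), and \(a\) has the polynomial preimage \(g\), the element \(d_{A/k}a\) is the image of \(d_{P/k}g\) under the map \(P\to A\). This is exactly where no continuity of derivations is required, because \(d_{P/k}g\) involves only finitely many terms.
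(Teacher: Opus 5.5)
Your proposal is correct and is essentially the paper's proof. You truncate \(f\) at degree \(N\), note that both \(f-g\in\mathfrak m^{N+1}\) and \(\partial_i f-\partial_i g\in\mathfrak m^N\) lie in \(J_f\), and apply the finite Leibniz formula to the polynomial representative to get \(d_{A/k}a=\sum_i[\partial_i g]\,d[x_i]=0\). The surjectivity of \(k[x_1,\ldots,x_n]\to A\) and the conormal presentation are correct but unnecessary, as you note yourself: only the functoriality map along \(k[x_1,\ldots,x_n]\to A\) is used.
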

\begin{proof}
Choose \(N\) such that \(\mathfrak m^N\subseteq J_f\), and truncate
\(f\) at total degree \(N\), obtaining a polynomial \(P\). Then
\[
f-P\in\mathfrak m^{N+1}\subseteq J_f,\qquad
\partial_i f-\partial_iP\in\mathfrak m^N\subseteq J_f.
\]
Since \(\partial_i f\in J_f\), every \(\partial_iP\) vanishes in \(A\).
The elements \(P\) and \(f\) represent the same class in \(A\), so
\[
d_{A/k}a=d_{A/k}[P]
=\sum_i[\partial_iP]\,d[x_i]=0.
\]
\end{proof}

Thus the abstract local colon theorem applies to the original formal
power-series setting and gives the required conclusion.

\begin{corollary}\label{cor:hns}
Let \(k\) be a field of characteristic zero and let
\(R=k[[x_1,\ldots,x_n]]\). If \(J_f\) is \(\mathfrak m\)-primary, then
\[
\boxed{J_f:f\not\subseteq
(J_f,f)+\mathfrak m(J_f:f).}
\]
In particular, \(J_f:f\not\subseteq(J_f,f)\).
\end{corollary}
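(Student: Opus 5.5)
The plan is to apply Theorem~\ref{thm:local-strengthening} with \(R=k[[x_1,\ldots,x_n]]\), \(\mathfrak n=\mathfrak m\), \(I=J_f\), and \(a=[f]\in A=R/J_f\). Once its hypotheses are checked, the boxed conclusion \(J_f:f\not\subseteq(J_f,f)+\mathfrak m(J_f:f)\) is exactly the conclusion of that theorem. The weaker statement \(J_f:f\not\subseteq(J_f,f)\) then follows immediately, since \((J_f,f)\subseteq(J_f,f)+\mathfrak m(J_f:f)\); alternatively, one can cite Theorem~\ref{thm:noetherian-colon} directly.

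The hypotheses to verify, in order, are the following.
\begin{enumerate}
\item \(k\) has characteristic zero and \(R\) is a commutative local \(k\)-algebra with maximal ideal \(\mathfrak m\). This holds by assumption and standard facts.
\item \(J_f\subsetneq R\). Since \(J_f\) is \(\mathfrak m\)-primary, it is contained in \(\mathfrak m\) and so is proper.
\item \(A=R/J_f\) is Noetherian. This is a quotient of the Noetherian ring \(R\); in fact \(A\) is Artinian local, because \(\mathfrak m^N\subseteq J_f\) for some \(N\).
\item \(a\) is nilpotent. In the Artinian local ring \(A\), every element of the maximal ideal \(\mathfrak m/J_f\) is nilpotent. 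So it suffices to know \(f\in\mathfrak m\), which is the standing convention of Section~\ref{sec:background}. If \(f\notin\mathfrak m\), then \(f\) is a unit, and \(J_f:f=J_f\) while \((J_f,f)=R\). In that case the noncontainment fails, so the hypothesis \(f\in\mathfrak m\) is genuinely needed and should be stated as inherited from the background setup.
\item \(d_{A/k}a=0\). This is precisely Lemma~\ref{lem:finite-jet}.
\end{enumerate}

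The only substantive step is the differential vanishing. It has already been handled by the finite-jet truncation in Lemma~\ref{lem:finite-jet}, which avoids the continuity issue with ordinary Kähler differentials on formal power series. The main point needing care is therefore the nilpotency check: I would make explicit that \(f\in\mathfrak m\), which is the singularity-at-the-origin convention, and then conclude by invoking Theorem~\ref{thm:local-strengthening}.
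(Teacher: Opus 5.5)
Your proposal is correct and follows the paper's argument exactly: \(R/J_f\) is Artinian local, which gives the Noetherian hypothesis and the nilpotence of \([f]\); Lemma~\ref{lem:finite-jet} gives \(d_{A/k}a=0\); and Theorem~\ref{thm:local-strengthening} then yields the conclusion. Your observation that nilpotence requires \(f\in\mathfrak m\) is also correct. This is inherited from the setup of Section~\ref{sec:background}, and it is genuinely necessary, since a unit \(f\) gives \(J_f:f=J_f\subseteq R=(J_f,f)\). The paper's proof uses this hypothesis without stating it.
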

\begin{proof}
The algebra \(A=R/J_f\) is Artinian local, so \([f]\) is nilpotent.
Apply Lemma~\ref{lem:finite-jet} and
Theorem~\ref{thm:local-strengthening}.
\end{proof}

The formal power-series case uses a finite truncation to avoid the
continuity issue. For a smooth finite-type algebra, the cotangent module
is finite projective, and a dual basis expresses \(df\) directly as a
linear combination of the derivatives \(\delta(f)\). This gives the
corresponding local statement.

\begin{corollary}\label{cor:smooth-local}
Let \((R,\mathfrak n)\) be a localization of a smooth finitely generated
\(k\)-algebra, where \(\operatorname{char}k=0\), and define
\[
J_f=(\delta(f):\delta\in\operatorname{Der}_k(R)).
\]
If \(R/J_f\) is Artinian local and \(f\in\mathfrak n\), then
\[
J_f:f\not\subseteq(J_f,f)+\mathfrak n(J_f:f).
\]
\end{corollary}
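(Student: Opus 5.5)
The plan is to apply Theorem~\ref{thm:local-strengthening} with \(I=J_f\) and \(A=R/J_f\). This requires three facts: \(A\) is Noetherian, \(a=[f]\) is nilpotent, and \(d_{A/k}a=0\). We also need \(J_f\ne R\), which is part of the hypothesis that \(R/J_f\) is Artinian local. The ring \(A\) is Artinian, hence Noetherian. Its maximal ideal is \(\mathfrak n/J_f\), and it is nilpotent. Since \(f\in\mathfrak n\), the class \(a\) lies in this maximal ideal and is therefore nilpotent. All the work is in the differential vanishing.

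For \(d_{A/k}a=0\), I would use that \(R\) is a localization of a smooth finitely generated \(k\)-algebra \(S\). Then \(\Omega_{R/k}=S^{-1}\Omega_{S/k}\) is a finitely generated projective \(R\)-module, and since \(R\) is local it is free. The steps are as follows.
\begin{enumerate}
\item Choose a basis \(\omega_1,\ldots,\omega_m\) of \(\Omega_{R/k}\), with dual basis \(\phi_1,\ldots,\phi_m\in\operatorname{Hom}_R(\Omega_{R/k},R)\).
\item Compose with the universal derivation to get \(\delta_j=\phi_j\circ d_{R/k}\in\operatorname{Der}_k(R)\).
\item Expand in the basis: \(d_{R/k}f=\sum_j\phi_j(d_{R/k}f)\,\omega_j=\sum_j\delta_j(f)\,\omega_j\).
\item Note that each \(\delta_j(f)\) lies in \(J_f\) by the definition of \(J_f\).
\end{enumerate}
More invariantly, even without freeness, a dual basis of the finite projective module gives \(d_{R/k}f=\sum_j\delta_j(f)\,\omega_j\) with \(\delta_j\in\operatorname{Der}_k(R)\).

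Next I would push this identity to \(A\). The natural map \(\Omega_{R/k}\to\Omega_{A/k}\) satisfies \(d_{R/k}f\mapsto d_{A/k}a\). It is \(R\)-linear, and \(J_f\) annihilates \(\Omega_{A/k}\). Therefore
\[
d_{A/k}a=\sum_j[\delta_j(f)]\,\bar\omega_j=0.
\]
Theorem~\ref{thm:local-strengthening} then gives \(J_f:f\not\subseteq(J_f,f)+\mathfrak n(J_f:f)\).

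The main point to verify carefully is that \(\Omega_{R/k}\) is finite projective, so that the dual basis exists. This holds because \(\Omega_{S/k}\) is finite projective for smooth \(S\) and Kähler differentials commute with localization. Unlike the power-series case, no continuity or truncation argument is needed, because \(d_{R/k}\) is the genuine universal derivation of \(R\).
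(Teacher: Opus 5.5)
Your proof is correct and matches the paper's argument essentially step for step. You take a dual basis of the finite projective module \(\Omega_{R/k}\) to get \(d_{R/k}f=\sum_j\delta_j(f)\,\omega_j\in J_f\Omega_{R/k}\), hence \(d_{A/k}a=0\); nilpotence then follows from the Artinian local hypothesis, and Theorem~\ref{thm:local-strengthening} finishes the proof. Your added observation that \(\Omega_{R/k}\) is actually free because \(R\) is local is a harmless simplification.
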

\begin{proof}
Smoothness makes \(\Omega_{R/k}\) finite projective. Choose a dual
basis \(\omega_1,\ldots,\omega_s\in\Omega_{R/k}\) and
\(\varphi_1,\ldots,\varphi_s\in
\operatorname{Hom}_R(\Omega_{R/k},R)\), so that
\[
\omega=\sum_j\varphi_j(\omega)\omega_j
\quad\text{for every }\omega\in\Omega_{R/k}.
\]
By the universal property of K\"ahler differentials, every
\(\varphi_j\) corresponds to a derivation
\(\delta_j\in\operatorname{Der}_k(R)\). Applying the dual-basis
identity to \(df\) gives
\[
df=\sum_j\delta_j(f)\omega_j\in J_f\Omega_{R/k},
\]
hence
\(d_{R/J_f}[f]=0\). The remaining assumptions imply that \([f]\) is
nilpotent, so Theorem~\ref{thm:local-strengthening} applies.
\end{proof}

The same proof applies to polynomial local rings, convergent real or
complex power-series rings, algebraic power-series rings, and
equicharacteristic complete regular local rings containing a
characteristic-zero coefficient field. The formal case uses
Lemma~\ref{lem:finite-jet}; the analytic case uses the corresponding
finite-dimensional critical quotient.

The preceding two corollaries assume that the critical quotient is
Artinian, so nilpotence of \([f]\) follows from finite length. When the
critical locus is positive-dimensional, this implication is no longer
available. We first record a fact about differential constants in
function fields.

\begin{lemma}\label{lem:differential-constants}
Let \(K/k\) be a finitely generated field extension of characteristic
zero. If \(u\in K\) satisfies \(d_{K/k}u=0\), then \(u\) is algebraic
over \(k\).
\end{lemma}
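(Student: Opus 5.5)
The plan is to argue by contrapositive, using the standard fact that in characteristic zero the module of Kähler differentials detects transcendence. Let \(k'\) be the algebraic closure of \(k\) inside \(K\), and suppose \(u\) is transcendental over \(k\). The aim is then to produce a \(k\)-derivation \(D:K\to K\) with \(D(u)\ne0\). By the universal property of \(\Omega_{K/k}\), such a \(D\) factors as \(\varphi\circ d_{K/k}\) for some \(K\)-linear \(\varphi:\Omega_{K/k}\to K\), which forces \(d_{K/k}u\ne0\).

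\textbf{Step 1 (transcendence basis through \(u\)).} Since \(K/k\) is finitely generated and \(u\) is transcendental, \(\{u\}\) is algebraically independent. It therefore extends to a finite transcendence basis \(\{u=t_1,t_2,\ldots,t_m\}\) of \(K/k\). Put \(F=k(t_1,\ldots,t_m)\), a purely transcendental extension. On \(F\) the partial derivative \(\partial/\partial t_1\) is a \(k\)-derivation sending \(u\) to \(1\).

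\textbf{Step 2 (extension to \(K\)).} The extension \(K/F\) is finite and algebraic, and it is separable because the characteristic is zero. Every derivation \(F\to K\) therefore extends uniquely to a derivation \(K\to K\). Concretely, if \(K=F(\theta)\) with minimal polynomial \(p\), then \(p'(\theta)\ne0\) by separability, and we set
\[
D(\theta)=-\,p^{D}(\theta)/p'(\theta),
\]
where \(p^{D}\) denotes \(p\) with \(D\) applied to its coefficients. Equivalently, \(\Omega_{K/F}=0\), and the first fundamental exact sequence gives \(K\otimes_F\Omega_{F/k}\twoheadrightarrow\Omega_{K/k}\). Separability makes this map injective, so \(\Omega_{K/k}\) has \(K\)-basis \(dt_1,\ldots,dt_m\). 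In particular \(du=dt_1\ne0\).

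\textbf{Main obstacle.} The only delicate point is the injectivity in Step 2, i.e.\ that \(\Omega_{F/k}\otimes_FK\to\Omega_{K/k}\) is injective for a finite separable \(K/F\). I would handle it by the explicit derivation-extension formula above, using the primitive element theorem. This reduces injectivity to the existence of enough derivations, which the formula supplies directly. With this in hand the contrapositive is complete: \(d_{K/k}u=0\) forces \(u\) to be algebraic over \(k\).
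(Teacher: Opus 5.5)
Your proposal is correct and follows essentially the paper's argument: extend \(u\) to a transcendence basis (separating in characteristic zero) and observe that \(du\) is then a member of the \(K\)-basis \(dt_1,\ldots,dt_m\) of \(\Omega_{K/k}\), hence nonzero. You also make the nonvanishing explicit by extending \(\partial/\partial t_1\) to a derivation of \(K\), and the auxiliary field \(k'\) you introduce is never used and can be dropped.
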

\begin{proof}
Choose a separating transcendence basis
\(t_1,\ldots,t_d\) for \(K/k\). Then \(K\) is finite separable over
\(k(t_1,\ldots,t_d)\), and
\[
\Omega_{K/k}\simeq
\bigoplus_{i=1}^dK\,dt_i.
\]
If \(u\) were transcendental over \(k\), it could be included in a
transcendence basis after replacing the chosen basis. In the
corresponding decomposition, \(du\) would be a basis vector and could
not vanish. Thus \(u\) is algebraic over \(k\).
\end{proof}

The lemma allows us to work component by component. Differential
vanishing makes the function algebraic over the base field on each
irreducible component; after subtracting its value at the chosen
rational critical point, that algebraic constant must be zero. This
recovers the nilpotence needed for the main theorem.

\begin{corollary}\label{cor:positive-dimensional}
Let \(R\) be a smooth finitely generated algebra over a field \(k\) of
characteristic zero, let \(f\in R\), and let \(x\) be a \(k\)-rational
critical point. The critical locus need not be zero-dimensional. Put
\[
B=(R/J_f)_{\mathfrak m_x},\qquad a=[f-f(x)]\in B.
\]
Then \(a\) is nilpotent and
\[
(J_f:f-f(x))_{\mathfrak m_x}
\not\subseteq
(J_f,f-f(x))_{\mathfrak m_x}
+\mathfrak m_x(J_f:f-f(x))_{\mathfrak m_x}.
\]
\end{corollary}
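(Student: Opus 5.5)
The plan is to apply Theorem~\ref{thm:local-strengthening} to the local ring \(R_{\mathfrak m_x}\) with \(I=(J_f)_{\mathfrak m_x}\) and with \(f-f(x)\) in place of \(f\). The quotient is \(B=(R/J_f)_{\mathfrak m_x}\). It is Noetherian, since \(R\) is of finite type over \(k\). It is a proper quotient, because \(x\) is a critical point: \(J_f\subseteq\mathfrak m_x\). Two hypotheses of that theorem then remain to be checked, \(d_{B/k}a=0\) and nilpotence of \(a\). Colon ideals and sums commute with localization, since \(J_f\) is finitely generated. So the conclusion of Theorem~\ref{thm:local-strengthening} is exactly the displayed noncontainment.

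For differential vanishing I would repeat the dual-basis argument of Corollary~\ref{cor:smooth-local}. The constant \(f(x)\in k\) has zero differential, so \(d(f-f(x))=df=\sum_j\delta_j(f)\omega_j\in J_f\Omega_{R/k}\). Hence \(d_{(R/J_f)/k}[f-f(x)]=0\), and by functoriality under localization \(d_{B/k}a=0\).

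Nilpotence is the main step. Since \(B\) is Noetherian, it suffices to show that \(a\) lies in every minimal prime of \(B\). Such a prime corresponds to a prime \(\mathfrak p\supseteq J_f\) of \(R\) with \(\mathfrak p\subseteq\mathfrak m_x\). Set \(K=\operatorname{Frac}(R/\mathfrak p)\), a finitely generated field extension of \(k\). The map \(R/J_f\to R/\mathfrak p\to K\) sends \(d[f-f(x)]=0\) to \(d_{K/k}\bar u=0\), where \(\bar u\) is the image of \(f-f(x)\). By Lemma~\ref{lem:differential-constants}, \(\bar u\) is algebraic over \(k\). It lies in the domain \(R/\mathfrak p\), and \(R/\mathfrak p\) contains no elements algebraic over \(k\) except those of the algebraic closure of \(k\) in it. So \(\bar u\) satisfies a minimal polynomial \(g\in k[t]\) with \(g(\bar u)=0\) in \(R/\mathfrak p\).

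Now reduce modulo \(\mathfrak m_x/\mathfrak p\). This is legitimate because \(\mathfrak p\subseteq\mathfrak m_x\), and \(R/\mathfrak m_x=k\) since \(x\) is rational. The image of \(\bar u\) is \(f(x)-f(x)=0\), so \(g(0)=0\). Since \(g\) is irreducible, \(g=t\), and therefore \(\bar u=0\), that is, \(f-f(x)\in\mathfrak p\).

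Thus \(a\) lies in all minimal primes of \(B\), hence in its nilradical, so \(a\) is nilpotent. This establishes the first assertion, and Theorem~\ref{thm:local-strengthening} then gives the noncontainment. The only delicate point is the passage from "algebraic constant" to "zero": it uses irreducibility of the minimal polynomial, together with the existence of a \(k\)-point through which every relevant component passes. That is exactly where the localization at \(\mathfrak m_x\) and the rationality of \(x\) enter.
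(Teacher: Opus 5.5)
Your proposal is correct and follows essentially the same route as the paper. The dual-basis argument gives \(d_{B/k}a=0\), Lemma~\ref{lem:differential-constants} makes the image of \(a\) algebraic over \(k\) on each minimal prime, and evaluating its monic irreducible polynomial at the rational point \(x\) forces that polynomial to be \(T\); Theorem~\ref{thm:local-strengthening} is then applied to \(R_{\mathfrak m_x}\). One small correction: colon ideals commute with localization because the ideal being divided by, \((f-f(x))\), is finitely generated, not because \(J_f\) is, but this does not affect the argument.
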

\begin{proof}
As above, \(d_{B/k}a=0\). Let \(\mathfrak p\) be a minimal prime of
\(B\), and let \(K=\operatorname{Frac}(B/\mathfrak p)\). The image
\(\bar a\in K\) has zero differential, so
Lemma~\ref{lem:differential-constants} shows that \(\bar a\) is
algebraic over \(k\). Let \(P(T)\in k[T]\) be its monic irreducible
polynomial. The component defined by \(\mathfrak p\) passes through the
local point \(x\), and \(a(x)=f(x)-f(x)=0\). Applying the residue map at
\(x\) to \(P(\bar a)=0\) gives \(P(0)=0\). Irreducibility then forces
\(P(T)=T\), so \(\bar a=0\). This holds for every minimal prime, hence
\[
a\in\bigcap_{\mathfrak p\in\operatorname{Min}B}\mathfrak p
=\sqrt{0_B}.
\]
The nilradical of a Noetherian ring is nilpotent, so \(a\) is
nilpotent. Apply Theorem~\ref{thm:local-strengthening}.
\end{proof}

\section{The integral-closure strengthening is false}
\label{sec:integral-counterexample}

The following example shows that the controlled term
\(\mathfrak m(J_f:f)\) in Theorem~\ref{thm:local-strengthening} cannot
be replaced by the entire integral closure.

\begin{lemma}\label{lem:filtered-regular}
Let \((S,\mathfrak m)\) be a separated \(\mathfrak m\)-adically
filtered ring, and let \(g_1,\ldots,g_r\in S\). If their initial forms
\(g_1^*,\ldots,g_r^*\) form a regular sequence in
\(\gr_{\mathfrak m}S\), then
\[
\gr_{\mathfrak m}\bigl(S/(g_1,\ldots,g_r)\bigr)
\simeq
\gr_{\mathfrak m}S/(g_1^*,\ldots,g_r^*).
\]
\end{lemma}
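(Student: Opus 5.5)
The plan is to reduce the isomorphism to a \emph{standard basis} statement about the ideal \(\mathfrak a=(g_1,\ldots,g_r)\), and to prove that statement by a finite descent that uses only separatedness and exactness of the Koszul complex in degree one. No completeness or Noetherian hypothesis will be needed.

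\textbf{Setup.} For nonzero \(s\in S\), separatedness gives a finite order \(\operatorname{ord}(s)=\max\{p: s\in\mathfrak m^p\}\) and an initial form \(s^*\in\mathfrak m^{p}/\mathfrak m^{p+1}\). Set \(d_i=\operatorname{ord}(g_i)\); each \(g_i\) is nonzero because \(g_i^*\) is a nonzerodivisor in a nonzero ring.

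\textbf{Reduction.} In degree \(p\),
\[
\gr_{\mathfrak m}(S/\mathfrak a)_p=\mathfrak m^p/\bigl(\mathfrak m^{p+1}+\mathfrak a\cap\mathfrak m^p\bigr),
\]
where \(\mathfrak m\) also denotes its image in \(S/\mathfrak a\). So the natural map \(\gr_{\mathfrak m}S\to\gr_{\mathfrak m}(S/\mathfrak a)\) is surjective. Its kernel in degree \(p\) is the set of initial forms of elements of \(\mathfrak a\cap\mathfrak m^p\), taken modulo \(\mathfrak m^{p+1}\). This kernel clearly contains \((g_1^*,\ldots,g_r^*)_p\). It therefore suffices to prove
\[
\mathfrak a\cap\mathfrak m^p=\sum_i g_i\,\mathfrak m^{p-d_i}+\mathfrak m^{p+1}\cap\mathfrak a
\quad\text{modulo }\mathfrak m^{p+1},
\]
with the convention \(\mathfrak m^{j}=S\) for \(j\le0\). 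Equivalently, every \(h\in\mathfrak a\cap\mathfrak m^p\) can be written as \(h=\sum a_ig_i\) with \(a_i\in\mathfrak m^{p-d_i}\). Then the initial form of \(h\) in degree \(p\) lies in \((g^*)_p\).

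\textbf{Descent.} Start from any expression \(h=\sum a_ig_i\) and set \(s=\min_i(\operatorname{ord}a_i+d_i)\), with \(\operatorname{ord}0=\infty\). If \(s\ge p\) we are done. If \(s<p\), define \(b_i^*\) to be the image of \(a_i\) in \(\mathfrak m^{s-d_i}/\mathfrak m^{s-d_i+1}\); this is zero when \(\operatorname{ord}a_i>s-d_i\). Comparing degree-\(s\) parts of \(h\in\mathfrak m^p\subseteq\mathfrak m^{s+1}\) gives
\[
\sum_i b_i^*g_i^*=0\quad\text{in }\gr_{\mathfrak m}S.
\]
Since \(g_1^*,\ldots,g_r^*\) is a homogeneous regular sequence, the Koszul complex has vanishing \(H_1\); this holds in any commutative graded ring, by the usual induction on \(r\). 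Hence
\[
b^*=\sum_{j<l}c_{jl}^*\bigl(g_l^*e_j-g_j^*e_l\bigr)
\]
with \(c_{jl}^*\) homogeneous of degree \(s-d_j-d_l\), which may be taken to be the homogeneous components of any solution. Lift each \(c_{jl}^*\) to \(c_{jl}\in\mathfrak m^{s-d_j-d_l}\) and replace \(a\) by
\[
a'=a-\sum_{j<l}c_{jl}\bigl(g_le_j-g_je_l\bigr).
\]
The Koszul vectors are exact relations in \(S\), so \(h=\sum a_i'g_i\) still holds. Moreover \(a_i'\) differs from \(a_i\) by a lift of \(b_i^*\) up to \(\mathfrak m^{s-d_i+1}\), so \(a_i'\in\mathfrak m^{s-d_i+1}\). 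Thus \(s\) strictly increases. Because \(s\) is an integer bounded above by \(p\) during the process, at most \(p-s_0\) steps reach \(s\ge p\). This finiteness is what makes completeness unnecessary.

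\textbf{Main obstacle.} The delicate step is the bookkeeping in the descent. One must handle the indices with \(\operatorname{ord}a_i>s-d_i\) by setting \(b_i^*=0\), and the lifts must land in the correct powers \(\mathfrak m^{s-d_i+1}\), including when exponents are nonpositive. The Koszul \(H_1=0\) input over an arbitrary graded ring is standard, and I would cite it or sketch it by induction on \(r\).
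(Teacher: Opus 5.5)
Your proof is correct, and it takes a genuinely different route from the paper's.

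\textbf{The paper's route.} It inducts on \(r\). For one element, the nonzerodivisor property of \(g_1^*\) gives \(\operatorname{ord}(g_1h)=\operatorname{ord}(g_1)+\operatorname{ord}(h)\), so every initial form of an element of \((g_1)\) is a multiple of \(g_1^*\). It then passes to \(S/(g_1,\ldots,g_{r-1})\), identifies its associated graded ring by the induction hypothesis, and reruns the one-element argument for the image of \(g_r\).

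\textbf{Your route.} You never leave \(S\) and \(\gr_{\mathfrak m}S\). You prove the standard-basis equality \(\mathfrak a\cap\mathfrak m^p=\sum_i g_i\mathfrak m^{p-d_i}\) for all \(r\) at once by lifting syzygies of the initial forms. The regular-sequence hypothesis enters only through \(H_1=0\) for the Koszul complex on \(g_1^*,\ldots,g_r^*\), and the Koszul relations lift exactly to \(S\). Your bookkeeping holds up, including the negative target degrees: if \(s-d_i<0\) then \(a_ig_i\in\mathfrak m^{d_i}\subseteq\mathfrak m^{s+1}\), and if \(s-d_i-d_l<0\) then any correction term \(c_{il}g_l\) already lies in \(\mathfrak m^{d_l}\subseteq\mathfrak m^{s-d_i+1}\).

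\textbf{What each buys.} The paper's argument is shorter and needs nothing beyond the nonzerodivisor property. However, it tacitly applies the one-element case to a quotient ring that need not be separated when \(S\) is not Noetherian. This is harmless: the one-element argument still works if \(\operatorname{ord}=\infty\) is allowed, and in the application \(R\) is Noetherian local, so Krull's intersection theorem applies. Your argument avoids the issue entirely. It also yields the sharper standard-basis statement and extends verbatim to any generators whose initial-form syzygies lift. The cost is importing Koszul acyclicity.

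One wording fix: the standard-basis statement is stronger than, not equivalent to, the congruence modulo \(\mathfrak m^{p+1}\) that you display. You only need that it implies that congruence.
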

\begin{proof}
We argue by induction on \(r\). For \(r=1\), regularity of \(g_1^*\)
implies that for every \(h\in S\),
\[
\operatorname{ord}_{\mathfrak m}(g_1h)
=\operatorname{ord}_{\mathfrak m}(g_1)
+\operatorname{ord}_{\mathfrak m}(h),
\]
because the product of the two initial forms is nonzero. Hence the
initial ideal of \((g_1)\) is generated by \(g_1^*\), which proves the
claim. For the induction step, first pass to \(S/(g_1,\ldots,g_{r-1})\).
By the induction hypothesis its associated graded ring is the quotient
by \(g_1^*,\ldots,g_{r-1}^*\). The image of \(g_r^*\) is a
nonzerodivisor there, so the one-element argument applies once more.
\end{proof}

We apply the filtered lemma to a concrete five-variable function. The
lowest-degree parts of its Jacobian generators form a regular sequence,
so the integral closure and annihilator can first be computed in the
associated graded algebra and then transferred back to the formal
power-series ring.

\begin{theorem}\label{thm:integral-counterexample}
Let
\[
R=\mathbb Q[[x_1,\ldots,x_5]],\qquad
\ell=x_1+\cdots+x_5,
\]
and set
\[
f=\frac14\sum_{i=1}^5x_i^4+\ell^5.
\]
Then \(f\) defines an isolated hypersurface singularity and
\[
\boxed{J_f:f\subseteq\overline{J_f}.}
\]
More precisely, \(\overline{J_f}=\mathfrak m^3\).
\end{theorem}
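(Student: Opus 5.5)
The plan is to compute everything through the associated graded ring, using Lemma~\ref{lem:filtered-regular}. The partial derivatives are
\[
g_i=\partial_if=x_i^3+5\ell^4,
\]
with initial forms \(g_i^*=x_i^3\). These form a regular sequence in \(\gr_{\mathfrak m}R=\mathbb Q[x_1,\ldots,x_5]\). By Lemma~\ref{lem:filtered-regular}, \(\gr(R/J_f)\simeq\mathbb Q[x]/(x_1^3,\ldots,x_5^3)\), which is finite-dimensional. Hence \(R/J_f\) is Artinian, so \(f\) has an isolated singularity and \(\mu_f=3^5=243\).

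Next I compute the integral closure. Since \(J_f\subseteq\mathfrak m^3\) and \(\mathfrak m^3\) is integrally closed (a power of the maximal ideal of a regular local ring), we get \(\overline{J_f}\subseteq\mathfrak m^3\). For the reverse inclusion I would exhibit \(J_f\) as a reduction of \(\mathfrak m^3\). Modulo \(\mathfrak m^4\) the generators agree with \(x_i^3\), and \((x_1^3,\ldots,x_5^3)\) is a reduction of \(\mathfrak m^3\) because \(x^\alpha\) with \(|\alpha|=3\) is integral over it, as seen from monomial Newton polyhedra. The cleanest route is to show \(\mathfrak m^{3+s}=J_f\mathfrak m^{s}\) for some \(s\). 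In the graded ring, \((x_1^3,\ldots,x_5^3)\mathfrak m^{s}\) contains all monomials of degree \(3+s\) once \(s\ge 10\), since any monomial of degree \(\ge 11\) in five variables has some exponent \(\ge3\). The same equality then lifts to \(R\): by the associated-graded identification, \(\mathfrak m^{13}\subseteq J_f\mathfrak m^{10}+\mathfrak m^{14}\), and Nakayama gives \(\mathfrak m^{13}=J_f\mathfrak m^{10}\). So \(J_f\) is a reduction of \(\mathfrak m^3\), and \(\overline{J_f}=\mathfrak m^3\).

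The main step is showing \(J_f:f\subseteq\mathfrak m^3\), that is, \(\Ann_A([f])\subseteq\mathfrak m^3A\) in \(A=R/J_f\). Use the Euler-type relation \(\sum x_ig_i=\sum x_i^4+5\ell^5=4f+\ell^5\). This gives
\[
[f]=-\tfrac14[\ell^5]\quad\text{in }A,
\]
so it suffices to show \(h\ell^5\in J_f\Rightarrow h\in\mathfrak m^3\). Suppose \(h\notin\mathfrak m^3\), and let \(h^*\) be its initial form, of degree \(e\le 2\). The filtration on \(A\) has \(\gr A=G:=\mathbb Q[x]/(x_1^3,\ldots,x_5^3)\), where \(G\) is Gorenstein with socle degree \(10\). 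If \(h^*\ell^5\ne0\) in \(G\), then \(h\ell^5\) has initial form of degree \(e+5\) that is nonzero in \(\gr A\), so \(h\ell^5\notin J_f\). The problem therefore reduces to a graded statement: multiplication by \(\ell^5\colon G_e\to G_{e+5}\) is injective for \(e=0,1,2\). This is exactly the strong Lefschetz property for the monomial complete intersection \(G\) in characteristic zero (Stanley, Watanabe). By the symmetry of the Hilbert function \(h_e=h_{10-e}\) together with unimodality, \(\ell^{10-2e}\colon G_e\to G_{10-e}\) is bijective. Since \(5\le 10-2e\) for \(e\le2\), the map \(\ell^5\) is injective on \(G_e\). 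The degree \(e=2\) case is borderline but still fine, since \(5<6\).

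One gap remains. If \(h^*\ell^5=0\) in \(G\), the argument above fails, but this does not occur for \(e\le 2\) by the Lefschetz injectivity. We do need that the initial form of a nonzero element of \(\gr A\) detects nonmembership, which holds because \(\gr A=G\) was computed exactly by Lemma~\ref{lem:filtered-regular}. I expect the main obstacle to be citing or verifying strong Lefschetz cleanly over \(\mathbb Q\). If I wanted to avoid the citation, I would fall back on a direct check that \(\ell^5\) is injective on \(G_2\), which has dimension \(15\): since \(\dim G_7=\dim G_3=30\), this is a finite rank computation.
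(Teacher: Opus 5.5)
Your proposal is correct and follows the paper's proof essentially step for step. It uses the same identification \(\gr_{\mathfrak m}(R/J_f)\simeq\mathbb Q[x_1,\ldots,x_5]/(x_1^3,\ldots,x_5^3)\) via Lemma~\ref{lem:filtered-regular}, the same Nakayama argument showing \(J_f\) is a reduction of \(\mathfrak m^3\) (the paper uses \(\mathfrak m^{12}=J_f\mathfrak m^9\) rather than your \(\mathfrak m^{13}=J_f\mathfrak m^{10}\)), the same relation \([f]=-\tfrac14[\ell^5]\), and the same initial-form argument.

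The only difference is that where you cite the strong Lefschetz property, the paper proves the needed injectivity of \(\times\ell^5\) on degrees \(0,1,2\) directly. It decomposes \((\mathbb Q[x]/(x^3))^{\otimes5}\) into irreducible \(\mathfrak{sl}_2\)-modules with raising operator \(\times\ell\) (Watanabe's argument). This shows that the specific form \(\ell=x_1+\cdots+x_5\), not merely a generic linear form, is a Lefschetz element, which is exactly what your argument requires.
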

\begin{proof}
Write
\[
g_i=\partial_i f=x_i^3+5\ell^4,\qquad
J=J_f=(g_1,\ldots,g_5).
\]
The initial forms \(x_1^3,\ldots,x_5^3\) form a regular sequence in
\(\mathbb Q[x_1,\ldots,x_5]\). Lemma~\ref{lem:filtered-regular}
therefore gives
\begin{equation}\label{eq:graded-counterexample}
\gr_{\mathfrak m}(R/J)
\simeq C:=\mathbb Q[x_1,\ldots,x_5]/(x_1^3,\ldots,x_5^3).
\end{equation}
In particular, \(J\) is \(\mathfrak m\)-primary.

Since \(J\subseteq\mathfrak m^3\) and \(\mathfrak m^3\) is integrally
closed, monotonicity of integral closure gives
\(\overline J\subseteq\mathfrak m^3\). To see the stated integral
closedness directly, use the \(\mathfrak m\)-adic order on the regular
power-series ring: an element integral over \(\mathfrak m^3\) has
order at least \(3\), by comparing the least orders in an integral
dependence equation. Every monomial of degree twelve
in five variables is divisible by some \(x_i^3\), whence
\[
\mathfrak m^{12}
=(x_1^3,\ldots,x_5^3)\mathfrak m^9
\subseteq J\mathfrak m^9+\mathfrak m^{13}.
\]
Because \(J\subseteq\mathfrak m^3\), the quotient
\(Q=\mathfrak m^{12}/J\mathfrak m^9\) is a finite \(R\)-module. The
previous inclusion says \(Q\subseteq\mathfrak mQ\), while the reverse
inclusion is automatic. Thus \(Q=\mathfrak mQ\), and Nakayama's lemma
yields
\[
\mathfrak m^{12}=J\mathfrak m^9.
\]
Thus \(J\) is a reduction of \(\mathfrak m^3\). Since
\(\mathfrak m^3\) is integrally closed in the regular local ring \(R\),
\begin{equation}\label{eq:closure-counterexample}
\overline J=\mathfrak m^3.
\end{equation}

Let \(A=R/J\) and \(a=[f]\). The identities
\[
\sum_i x_ig_i=\sum_i x_i^4+5\ell^5,\qquad
4f=\sum_i x_i^4+4\ell^5
\]
give
\begin{equation}\label{eq:a-leading}
a=-\frac14[\ell^5].
\end{equation}
In the graded algebra \(C\) of \eqref{eq:graded-counterexample},
\[
\times\ell^5:C_q\longrightarrow C_{q+5}
\]
is injective for \(q=0,1,2\). We include the representation-theoretic
verification. On \(\mathbb Q[x]/(x^3)\), define
\[
e(x^j)=x^{j+1},\qquad
h(x^j)=(2j-2)x^j,\qquad
f_-(x^j)=j(3-j)x^{j-1}.
\]
These operators satisfy the \(\mathfrak{sl}_2\) relations. On the
fivefold tensor product, the raising operator is
\[
e=e_1+\cdots+e_5=\times\ell.
\]
Every finite-dimensional \(\mathfrak{sl}_2\)-module is a direct sum of
irreducibles. Total degree \(q\) is the weight space of weight
\(\lambda=2q-10\). If an irreducible module \(V_m\) contains this
weight, then \(m\ge|\lambda|=10-2q\). The map \(e^5\) is nonzero, and
hence injective on the one-dimensional \(\lambda\)-weight space of
\(V_m\), provided \(\lambda+10\le m\). For \(q\le2\),
\[
\lambda+10=2q\le10-2q\le m.
\]
Thus \(e^5\) is injective on every irreducible summand occurring in
\(C_q\), and hence on \(C_q\) itself. Equivalently,
\[
C\simeq\bigl(\mathbb Q[x]/(x^3)\bigr)^{\otimes5},
\]
and the socle degree is \(10\).

If \(0\ne h\in\Ann_A(a)\), let \(h^*\in C_q\) be its lowest initial
form. Since \(ha=0\), the product of the initial forms of \(h\) and
\(a\) is zero in \(\gr_{\mathfrak m}A\). Equation
\eqref{eq:a-leading} therefore gives \(h^*\ell^5=0\). The
injectivity above forces \(q\ge3\), and hence
\(\Ann_A(a)\subseteq\mathfrak m_A^3\). Since
\[
(J:f)/J=\Ann_A(a),\qquad J\subseteq\mathfrak m^3,
\]
we obtain
\[
J:f\subseteq\mathfrak m^3=\overline J.
\]
\end{proof}

\begin{remark}
The example does not contradict Corollary~\ref{cor:hns}. It shows that
the right-hand side cannot be enlarged uniformly to
\(\overline{J_f}\), or to an arbitrary ideal containing
\(\mathfrak m^3\). The local strengthening uses the controlled term
\(\mathfrak m(J_f:f)\).
\end{remark}

\bibliographystyle{amsplain}
\bibliography{refs}
\end{document}